\documentclass[12pt,dvipdfmx]{article}
\usepackage{algorithm}
\usepackage{algpseudocode}
\usepackage{amsfonts}
\usepackage{amsmath}
\usepackage{amssymb} 
\usepackage{amsthm} 
\usepackage[title]{appendix} 
\usepackage{autobreak}
\usepackage{bigdelim} 
\usepackage{bm}
\usepackage{breqn}
\usepackage{cancel} 
\usepackage{cases}
\usepackage{chngcntr}
\usepackage{colortbl} 
\usepackage{comment}
\usepackage{empheq} 
\usepackage{enumerate}
\usepackage{fancyhdr} 
\usepackage{fancyvrb} 
\usepackage{framed} 
\usepackage{graphicx}
\usepackage{hyperref}
\hypersetup{
  colorlinks,
  linkcolor={red!50!black},
  citecolor={blue!50!black},
  urlcolor={blue!80!black}
}
\usepackage{import}
\usepackage{mathrsfs} 
\usepackage{multirow} 
\usepackage{setspace} 
\usepackage{subfig} 
\usepackage{tikz} 
\usepackage{pgfplots}
\usepackage{pxrubrica} 
\usepackage{url}

\usepackage[top=35truemm,bottom=35truemm,left=30truemm,right=30truemm]{geometry} 
\sloppy 

\makeatletter

\@addtoreset{equation}{section}
\makeatother

\algtext*{EndWhile}
\algtext*{EndIf}
\algtext*{EndFor}
\newtheorem{theorem}{Theorem}[section]
\newtheorem{proposition}[theorem]{Proposition}%
\newtheorem{corollary}[theorem]{Corollary}
\newtheorem{lemma}[theorem]{Lemma}

\newtheorem{remark}[theorem]{Remark}%
%


\newcommand{\relmiddle}[1]{\mathrel{}\middle#1\mathrel{}}



\newcommand{\calK}{\mathcal{K}}

\newcommand{\calN}{\mathcal{N}}

\newcommand{\calS}{\mathcal{S}}

\newcommand{\DNN}{\mathcal{DNN}}
\newcommand{\CP}{\mathcal{CP}}
\newcommand{\COP}{\mathcal{COP}}
\newcommand{\SPN}{\mathcal{SPN}}
\newcommand{\DD}{\mathcal{DD}}
\newcommand{\SDD}{\mathcal{SDD}}

\newcommand{\RNum}[1]{\uppercase\expandafter{\romannumeral #1\relax}} 
\newcommand{\Rnum}[1]{\lowercase\expandafter{\romannumeral #1\relax}} 


\setcounter{footnote}{0}

\title{On the longest chain of faces of \\the completely positive and copositive cones}

\makeatletter
\let\@fnsymbol\@arabic
\makeatother

\author{
\normalsize
    Mitsuhiro Nishijima\thanks{Department of Industrial Engineering and Economics,
        Tokyo Institute of Technology, 2-12-1 Ookayama, Meguro-ku, 1528552, Tokyo, Japan; The Institute of Statistical Mathematics, 10-3 Midori-cho, Tachikawa-shi, 1908562, Tokyo, Japan.
        ({\tt nishijima.m.ae@m.titech.ac.jp}).}
        }

\begin{document}
\maketitle

\begin{abstract}\noindent
We consider a wide class of closed convex cones $\mathcal{K}$ in the space of real $n\times n$ symmetric matrices and establish the existence of a chain of faces of $\mathcal{K}$, the length of which is maximized at $\frac{n(n+1)}{2} + 1$.
Examples of such cones include, but are not limited to, the completely positive and the copositive cones.
Using this chain, we prove that the distance to polyhedrality of any closed convex cone $\mathcal{K}$ that is sandwiched between the completely positive cone and the doubly nonnegative cone of order $n \ge 2$, as well as its dual, is at least $\frac{n(n+1)}{2} - 2$, which is also the worst-case scenario.
\end{abstract}
\vspace{0.5cm}

\noindent
{\bf Key words. }Completely positive cone, Doubly nonnegative cone, Copositive cone, SPN cone, Longest chain of faces, Distance to polyhedrality
%

\section{Introduction}\label{sec:intro}
A completely positive cone and its dual, the copositive cone, have been a subject of considerable attention in the optimization field, as they can be used to represent many NP-hard problems.
The recent survey~\cite{DR2021} and references therein provide an overview of their properties and applications.
However, solving optimization problems with these cones is computationally challenging, and as a result, tractable approximations are often used in practice.
Two such examples are the doubly nonnegative cone and the cone of sums of a positive semidefinite matrix and a nonnegative matrix, also known as the SPN cone~\cite{YM2010}.

In this paper, we focus on the length $\ell_{\mathcal{K}}$ of the longest chain of faces and the distance $\ell_{\rm poly}(\mathcal{K})$ to polyhedrality for several matrix cones related to the aforementioned ones.
These lengths are related to various quantities that arise in linear algebra and optimization.
The first example is the \emph{Carath\'{e}odory number}, which can be viewed as the maximum \emph{cp-rank} of completely positive matrices for the completely positive cone.
Ito and Louren\c{c}o~\cite{IL2017} showed that $\ell_{\mathcal{K}} - 1$ is an upper bound for the Carath\'{e}odory number of a closed convex cone $\mathcal{K}$ containing no lines.
The second example is the \emph{singularity degree}.
The singularity degree of a linear conic feasibility problem is the minimum number of facial reduction steps needed to satisfy Slater's condition.
This quantity is known to be related to error bounds for the feasibility problem~\cite{LLP2023,Lourenco2021,Sturm2000}.
Waki and Muramatsu~\cite[Corollary~3.1]{WM2013} showed that the singularity degree of a linear conic feasibility problem over a closed convex cone $\mathcal{K}$ is bounded by $\ell_{\mathcal{K}}$ (see also \cite[Theorem~28.5.3]{Pataki2013}, \cite[Theorem~1]{LP2018}, and \cite[page~2315]{LMT2018}).
Louren\c{c}o, Muramatsu, and Tsuchiya~\cite[Theorem~10]{LMT2018} improved this bound and revealed that the singularity degree is bounded by $\ell_{\rm poly}(\mathcal{K}) + 1$.

The main results of this paper are disappointing for these quantities because the results imply that the upper bounds for them shown in the previous paragraph agree with trivial ones.
We prove that for any closed convex cone $\mathcal{K}$ sandwiched between the completely positive cone and the doubly nonnegative cone, the lengths $\ell_{\mathcal{K}}$ and $\ell_{\rm poly}(\mathcal{K})$ coincide with their trivial upper bounds (Corollary~\ref{cor:dist_poly_CP}).
Specifically, for such a cone $\mathcal{K}$ in the space of real $n\times n$ symmetric matrices, $\ell_{\mathcal{K}}$ equals $\frac{n(n+1)}{2} + 1$ and if $n\ge 2$, $\ell_{\rm poly}(\mathcal{K})$ equals $\frac{n(n+1)}{2} - 2$, which are the worst cases possible for these lengths.
The same holds for the dual cone (Corollary~\ref{cor:dist_poly_COP}).
To the best of our knowledge, these lengths, except for the length of the longest chain of faces of the doubly nonnegative cone~\cite[Proposition~21]{LMT2018}, have not been computed so far (see~\cite[Table~1]{IL2017}).

The cones $\mathcal{K}$ for which the above results hold are not limited to the completely positive, doubly nonnegative, SPN, and copositive cones: for example, the closure of the \emph{completely positive semidefinite cone}~\cite{BLP2017,LP2015} and the approximation hierarchies provided by Parrilo~\cite{Parrilo2000} and Pe\~{n}a, Vera, and Zuluaga~\cite{PVZ2007}.
Moreover, the result of length $\ell_{\mathcal{K}}$ holds for a wider class of $\mathcal{K}$, specifically for any closed convex cone sandwiched between the nonnegative diagonally dominant cone and the nonnegative cone (Proposition~\ref{prop:longest_chain_CP}), or between the nonnegative cone and the dual of the nonnegative scaled diagonally dominant cone (Proposition~\ref{prop:longest_chain_COP}).

We construct a chain of faces by specifying the pattern of zeros of the elements of the matrices belonging to each face to prove our main results.
The idea is inspired by the work of Louren\c{c}o, Muramatsu, and Tsuchiya~\cite{LMT2018}, but we use a different order in which the elements are restricted to zero.
To compute the distance to polyhedrality, it is necessary to construct a chain of faces such that as few polyhedral cones as possible appear (see Remark~\ref{rem:difference}).

The rest of this paper is organized as follows.
In Section~\ref{sec:preliminaries}, the notations and some lemmas used in this paper are introduced.
The main results are proven in Sections~\ref{sec:CP} and \ref{sec:COP}.
In Section~\ref{sec:conclusion}, we finish with concluding remarks.

\section{Preliminaries}\label{sec:preliminaries}
Matrices are denoted by boldface letters such as $\bm{A}$.
The $(i,j)$th element of a matrix $\bm{A}$ is expressed as $A_{ij}$.
Let $\bm{O}$ be a zero matrix with appropriate size.
$\calS^n$ is used to denote the space of real $n\times n$ symmetric matrices, and we define $T_n \coloneqq \frac{n(n+1)}{2}$.

A set $\mathcal{K} \subseteq \calS^n$ is called a \emph{cone} if $\alpha\bm{A} \in \mathcal{K}$ for all $\alpha \ge 0$ and $\bm{A}\in\mathcal{K}$.
Let $\mathcal{K}$ be a closed convex cone in $\calS^n$.
The set
\begin{equation*}
\mathcal{K}^* \coloneqq \left\{\bm{A}\in \calS^n \relmiddle| \sum_{i,j=1}^nA_{ij}B_{ij} \ge 0 \text{ for all $\bm{B}\in \mathcal{K}$}\right\}
\end{equation*}
is called the dual of $\mathcal{K}$.
Let $\mathcal{I}$ be a subset of $\{(i,j) \mid i,j = 1,\dots,n\}$.
Then we define $\mathcal{K}[\mathcal{I}] \coloneqq \{\bm{A}\in\mathcal{K} \mid A_{ij} = 0 \text{ for all $(i,j)\in\mathcal{I}$}\}$ and $\bm{E}[\mathcal{I}] \in \calS^n$ as the matrix with the $(i,j)$th and $(j,i)$th elements $0$ for each $(i,j)\in \mathcal{I}$ and $1$ for all other elements.

Let $\mathbb{R}_+^n$ be the set of $n$-dimensional nonnegative vectors.
We define
\begin{align*}
\CP^n &\coloneqq \left\{\sum_{i=1}^m\bm{a}_i\bm{a}_i^\top \relmiddle|\bm{a}_i\in\mathbb{R}_+^n \text{ for all $i = 1,\dots,m$}\right\},\\
\COP^n &\coloneqq \{\bm{A}\in\calS^n \mid \bm{x}^\top\bm{A}\bm{x} \ge 0 \text{ for all $\bm{x}\in\mathbb{R}_+^n$}\}
\end{align*}
and call them the \emph{completely positive cone} and the \emph{copositive cone}, respectively.
We use $\calS_+^n$ to denote the set of $n\times n$ symmetric positive semidefinite matrices.
The \emph{nonnegative cone} $\mathcal{N}^n$ is defined by the set of $n\times n$ symmetric matrices with only nonnegative elements.
The \emph{doubly nonnegative cone} $\calS_+^n \cap \mathcal{N}^n$ is denoted by $\DNN^n$ and the \emph{SPN cone} $\calS_+^n + \mathcal{N}^n$ by $\SPN^n$.
Let
\begin{equation*}
\DD^n \coloneqq \left\{\bm{A}\in \calS^n \relmiddle| A_{ii} \ge \sum_{j\neq i}\lvert A_{ij}\rvert \text{ for all $i = 1,\dots,i$}\right\}
\end{equation*}
denote the set of $n\times n$ symmetric diagonally-dominant matrices with nonnegative diagonal elements and let $\SDD^n$ be the set of matrices $\bm{D}\bm{A}\bm{D}$ such that $\bm{A}\in \DD^n$ and $\bm{D}\in \calS^n$ is a diagonal matrix with positive diagonal elements.
The \emph{nonnegative diagonally dominant cone} $\DD^n \cap \mathcal{N}^n$ is denoted by $\DD_+^n$, and the \emph{nonnegative scaled diagonally dominant cone} $\SDD^n \cap \mathcal{N}^n$ is denoted by $\SDD_+^n$.
These two cones were originally introduced by Gouveia, Pong, and Saee~\cite{GPS2020} to provide inner-approximation hierarchies for $\CP^n$.
The set $\DD_+^n$ is known to be the conical hull of the matrices $\bm{E}_{ij} \coloneqq (\bm{e}_i + \bm{e}_j)(\bm{e}_i + \bm{e}_j)^\top$ for $i,j = 1,\dots,n$~\cite[page~390]{GPS2020}, where $\bm{e}_i$ is the vector with the $i$th element 1 and the others 0.

The nonnegative cone is self-dual, i.e., $(\mathcal{N}^n)^* = \mathcal{N}^n$.
In addition, it is known that $\CP^n$ and $\COP^n$ are mutually dual~\cite[Theorem~3.28]{SB2021} and that $\DNN^n$ and $\SPN^n$ are also dual~\cite[Theorem~1.167]{SB2021}.
The dual of $\DD_+^n$ is
\begin{align}
(\DD_+^n)^* &= (\DD^n)^* + (\mathcal{N}^n)^* \nonumber\\
&=\{\bm{A} \in \calS^n \mid A_{ii} + A_{jj} \pm 2A_{ij} \ge 0 \text{ for all $i,j = 1,\dots,n$}\} + \mathcal{N}^n, \label{eq:dual_DD+}
\end{align}
where we use Corollaries 16.4.2 and 9.1.3 of \cite{Rockafellar1970} to derive the first equation and use \cite[Table~1]{PP2018} to derive the second equation.
Similarly, the dual of $\SDD_+^n$ is
\begin{align}
&(\SDD_+^n)^* \nonumber\\
&\quad= (\SDD^n)^* + (\mathcal{N}^n)^* \nonumber\\
&\quad= \left\{\bm{A} \in \calS^n \relmiddle|
\begin{aligned}
&A_{ii}A_{jj} \ge A_{ij}^2 \text{ for all $i,j = 1,\dots,n$ with $i\neq j$},\\
&A_{ii} \ge 0 \text{ for all $i = 1,\dots,n$}
\end{aligned}
\right\} + \mathcal{N}^n, \label{eq:dual_SDD+}
\end{align}
where we use \cite[Proposition~\mbox{1.\Rnum{2}}]{GPS2020} to derive the first equation and use \cite[Table~1]{PP2018} to derive the second equation.

Equation~\eqref{eq:dual_SDD+} leads to the following lemma.
Its proof is straightforward and thus omitted.
\begin{lemma}\label{lem:dual_SDD+}
Let $\bm{A} \in (\SDD_+^n)^*$.
Then the following statements hold:
\begin{enumerate}[(i)]
\item $A_{ii} \ge 0$ for all $i = 1,\dots,n$. \label{enum:dual_SDD+_diag}
\item If $A_{ii} = 0$, then $A_{ij} \ge 0$ for all $j = 1,\dots,n$. \label{enum:dual_SDD+_nondiag}
\end{enumerate}
\end{lemma}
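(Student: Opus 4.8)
The plan is to read everything off the explicit description \eqref{eq:dual_SDD+} of $(\SDD_+^n)^*$. Given $\bm{A}\in(\SDD_+^n)^*$, fix a decomposition $\bm{A}=\bm{B}+\bm{N}$ with $\bm{N}\in\mathcal{N}^n$ and $\bm{B}\in\mathbb{S}^n$ satisfying $B_{ii}\ge 0$ for all $i$ and $B_{ii}B_{jj}\ge B_{ij}^2$ for all $i\neq j$; such a pair exists precisely because $\bm{A}$ lies in the Minkowski sum on the right-hand side of \eqref{eq:dual_SDD+}. Everything then follows by comparing entries of $\bm{A}$ with those of $\bm{B}$ and $\bm{N}$.

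For item~\eqref{enum:dual_SDD+_diag}, I would just add the diagonal entries: $A_{ii}=B_{ii}+N_{ii}\ge 0$, since each summand is nonnegative. For item~\eqref{enum:dual_SDD+_nondiag}, suppose $A_{ii}=0$. Because $A_{ii}=B_{ii}+N_{ii}$ is a sum of two nonnegative reals, both must vanish, so in particular $B_{ii}=0$. Substituting $B_{ii}=0$ into $B_{ii}B_{jj}\ge B_{ij}^2$ forces $B_{ij}^2\le 0$, hence $B_{ij}=0$ for every $j\neq i$. Consequently $A_{ij}=B_{ij}+N_{ij}=N_{ij}\ge 0$ for all $j\neq i$, while the case $j=i$ is covered by item~\eqref{enum:dual_SDD+_diag}. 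This gives $A_{ij}\ge 0$ for all $j=1,\dots,n$.

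I do not expect any genuine obstacle here; this is why the statement is flagged as straightforward. The only point worth noting is that the decomposition $\bm{A}=\bm{B}+\bm{N}$ is in general not unique, but the argument never needs uniqueness — it uses only the existence of one valid decomposition together with the elementary fact that a sum of nonnegative numbers equals zero only when each term is zero.
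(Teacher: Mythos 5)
Your proof is correct and follows exactly the route the paper intends: the paper omits the proof as ``straightforward'' precisely because it reads off the decomposition $\bm{A}=\bm{B}+\bm{N}$ from \eqref{eq:dual_SDD+} as you do, using $B_{ii}\ge 0$, $N_{ii}\ge 0$ for item~(i) and $B_{ii}=0 \Rightarrow B_{ij}=0$ via $B_{ii}B_{jj}\ge B_{ij}^2$ for item~(ii). No gaps; nothing further is needed.
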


The following inclusions hold:
\begin{equation}
\DD_+^n \subseteq \SDD_+^n \overset{\scriptsize \text{(a)}}\subseteq \CP^n \overset{\scriptsize \text{(b)}}{\subseteq} \DNN^n \subseteq \mathcal{N}^n, \label{eq:inclusion_CP}
\end{equation}
where the inclusion~(a) follows from \cite[Proposition~\mbox{1.\Rnum{3}}]{GPS2020} and the others follow from their definitions.
Taking their dual, we also obtain the following inclusions:
\begin{equation}
\mathcal{N}^n \subseteq \SPN^n \overset{\scriptsize \text{(c)}}\subseteq \COP^n \subseteq (\SDD_+^n)^* \subseteq (\DD_+^n)^*. \label{eq:inclusion_COP}
\end{equation}
The inclusions~(b) and (c) hold with equality if and only if $n \le 4$; see Sections~\mbox{2.9} and \mbox{2.10} of \cite{SB2021}.
We state this fact explicitly because it will be exploited to compute the distance to polyhedrality.

\begin{lemma}\label{lem:equation}
The equations $\CP^n = \DNN^n$ and $\SPN^n = \COP^n$ hold if and only if $n \le 4$.
\end{lemma}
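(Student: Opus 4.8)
The plan is to split each claimed equality into its two inclusions and to note that in each case one direction is already recorded in \eqref{eq:inclusion_CP}--\eqref{eq:inclusion_COP}: inclusion~(b) gives $\CP^n \subseteq \DNN^n$, and inclusion~(c) gives $\SPN^n \subseteq \COP^n$, for every $n$. Hence the whole content of the lemma reduces to proving $\DNN^n \subseteq \CP^n$ for $n \le 4$, since the companion inclusion $\COP^n \subseteq \SPN^n$ will then follow by duality.

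The inclusion $\DNN^n \subseteq \CP^n$ for $n \le 4$ is the classical theorem of Maxfield--Minc and of Diananda, and the simplest route is to cite it; it is also recorded at \cite[page~178]{SB2021}. If a self-contained argument is preferred, I would argue by induction on $n$. The cases $n = 1, 2$ follow from an explicit rank-one decomposition. For the inductive step, take $\bm{A}\in\DNN^n$: if some diagonal entry vanishes, say $A_{nn} = 0$, then positive semidefiniteness (via the $2\times 2$ principal submatrices containing that entry) forces the entire $n$-th row and column to be zero, so $\bm{A}$ reduces to a matrix in $\DNN^{n-1}$ and induction applies; otherwise all $A_{ii} > 0$, and conjugating $\bm{A}$ by $\Diag(A_{11}^{-1/2},\dots,A_{nn}^{-1/2})$ --- an operation that preserves membership in both $\DNN^n$ and $\CP^n$, since the conjugating matrix and its inverse are nonnegative --- lets us assume $\bm{A}$ has unit diagonal and off-diagonal entries in $[0,1]$. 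For such $\bm{A}$ with $n\le 4$ one then produces a nonnegative combination of matrices $\bm{v}\bm{v}^\top$ with $\bm{v}\ge\bm{0}$ by a short case analysis that peels off a suitable rank-one term and uses nonnegativity of the $2\times 2$ principal minors to control the remainder.

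It remains to deduce $\COP^n \subseteq \SPN^n$ for $n \le 4$. Taking duals reverses inclusions, and by the duality relations already quoted in Section~\ref{sec:preliminaries} we have $(\CP^n)^* = \COP^n$ and $(\DNN^n)^* = \SPN^n$. Therefore the equality $\CP^n = \DNN^n$ just established yields $\COP^n = (\CP^n)^* = (\DNN^n)^* = \SPN^n$, which together with inclusion~(c) of \eqref{eq:inclusion_COP} completes the proof.

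The one genuinely nontrivial ingredient is $\DNN^n \subseteq \CP^n$ for $n\le 4$, so I expect that to be the main obstacle if a fully self-contained proof is wanted: the statement is sharp --- it fails for $n = 5$ by a classical $5\times 5$ counterexample --- so any argument must use the bound $n \le 4$ in an essential way (for instance through the low-dimensional case analysis above). If one is content to invoke the literature, the lemma is immediate from \cite[page~178]{SB2021} together with the duality bookkeeping.
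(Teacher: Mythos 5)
Your proposal is correct and takes essentially the same route as the paper: the paper gives no self-contained argument either, but simply cites \cite[page~178]{SB2021} for the fact that the inclusions (b) and (c) of \eqref{eq:inclusion_CP}--\eqref{eq:inclusion_COP} hold with equality when $n \le 4$, which is exactly the classical Maxfield--Minc/Diananda result you invoke. Your duality step deducing $\SPN^n = \COP^n$ from $\CP^n = \DNN^n$ via $(\CP^n)^* = \COP^n$ and $(\DNN^n)^* = \SPN^n$ is valid, and the optional inductive sketch (whose crucial $n \le 4$ case analysis is only gestured at) is not needed once the literature is cited.
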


Let $\mathcal{K}$ be a closed convex cone in $\calS^n$.
A nonempty convex subcone $\mathcal{F}$ of $\mathcal{K}$ is called a \emph{face} of $\mathcal{K}$ if for any $\bm{A},\bm{B}\in\mathcal{K}$, if $\bm{A} + \bm{B} \in \mathcal{F}$, then $\bm{A},\bm{B}\in\mathcal{F}$.
A \emph{chain of faces} of $\mathcal{K}$ with length $l$ is a sequence
\begin{equation}
\mathcal{F}_l \subsetneq \cdots \subsetneq \mathcal{F}_1 \label{eq:chain}
\end{equation}
such that every $\mathcal{F}_i$ is a face of $\mathcal{K}$.
We use $\ell_{\mathcal{K}}$ to denote the \emph{length of the longest chain of faces} of $\mathcal{K}$.
In addition, we define the \emph{distance $\ell_{\rm poly}(\mathcal{K})$ to polyhedrality} of $\mathcal{K}$ as the length \emph{minus one} of the longest chain \eqref{eq:chain} of faces of $\mathcal{K}$ such that $\mathcal{F}_l$ is polyhedral and every $\mathcal{F}_i$ with $i < l$ is not polyhedral.

The distance to polyhedrality was originally introduced in \cite{LMT2018} to bound the number of reduction steps in the facial reduction algorithm described therein.
The value $\ell_{\rm poly}(\mathcal{K})$ is $0$ if $\calK$ is a polyhedral cone such as $\calN^n$ and $\DD_+^n$.
It is greater than $0$ if $\calK$ is not polyhedral.
For instance, we have $\ell_{\rm poly}(\calS_+^n) = n-1$~\cite[Example~1]{LMT2018}.
See \cite[Remark~\mbox{39}]{Lourenco2021} and \cite[Example~1]{LMT2018} for further results on the distance to polyhedrality.

The following two lemmas provide the upper bounds for $\ell_{\mathcal{K}}$ and $\ell_{\rm poly}(\mathcal{K})$, respectively.

\begin{lemma}[{see~\cite[Proposition~21]{LMT2018}}]\label{lem:length_upper_bound}
If $\mathcal{K}$ is a closed convex cone in $\calS^n$, then $\ell_{\mathcal{K}} \le T_n + 1$.
\end{lemma}

\begin{lemma}\label{lem:dist_poly_upper_bound}
Let $\mathcal{K}$ be a closed convex cone in $\calS^n$.
\begin{enumerate}[(i)]
\item If $n = 1$, then $\ell_{\rm poly}(\mathcal{K}) = 0$. \label{enum:n=1}
\item If $n \ge 2$, then $\ell_{\rm poly}(\mathcal{K}) \le T_n - 2$. \label{enum:n_ge_2}
\end{enumerate}
\end{lemma}

\begin{proof}
First, note that every closed convex cone with dimension at most two is polyhedral (see, for example, \cite[Exercise~1.65]{SB2021}).
If $n = 1$, then the dimension of $\mathcal{K}$ is at most one, and thus item~\eqref{enum:n=1} holds.
In what follows, we prove item~\eqref{enum:n_ge_2}.
To obtain a contradiction, we assume that $l \coloneqq \ell_{\rm poly}(\mathcal{K}) \ge T_n - 1$.
It follows from $n \ge 2$ that $l \ge T_n - 1 \ge 2$.
Then there exists a chain $\mathcal{F}_{l+1} \subsetneq \cdots \subsetneq \mathcal{F}_1$ of faces of $\mathcal{K}$ such that $\mathcal{F}_{l}$ is not polyhedral.
As $\dim(\mathcal{F}_l) < \cdots < \dim(\mathcal{F}_1)$ and each value must be an integer between $0$ and $T_n$, we have $\dim(\mathcal{F}_l) \le T_n - l + 1 \le 2$.
Then $\mathcal{F}_l$ is polyhedral, which is a contradiction.
\end{proof}

\section{The completely positive side}\label{sec:CP}
For $i = 1,\dots,n$ and $j = i,\dots,n$, let
\begin{equation*}
\mathcal{I}_{i,j} \coloneqq \bigcup_{k=1}^{i-1}\bigcup_{l=k}^n\{(k,l)\} \cup \bigcup_{l=j}^n\{(i,l)\}.
\end{equation*}
Here, $\mathcal{I}_{ij}$ and $\mathcal{I}_{i,j}$ are used interchangeably.
For convenience, we let $\mathcal{I}_{00} \coloneqq \emptyset$ and $\mathcal{I}_{i,n+1} \coloneqq \mathcal{I}_{i-1,i-1}$ for $i= 1,\dots,n$.
Under this notation, for $i = 1,\dots,n$ and $j = i,\dots,n$, we have
\begin{equation}
\mathcal{I}_{ij} = \mathcal{I}_{i,j+1} \cup \{(i,j)\}. \label{eq:I}
\end{equation}

\begin{lemma}\label{lem:face_CP}
Let $\mathcal{K} \subseteq \calS^n$ be a closed convex cone with $\mathcal{K} \subseteq \mathcal{N}^n$.
For $i = 1,\dots,n$ and $j = i,\dots,n$, the set $\mathcal{K}[\mathcal{I}_{ij}]$ is a face of $\mathcal{K}$.
\end{lemma}

\begin{proof}
It can be observed that $\mathcal{K}[\mathcal{I}_{ij}]$ is a convex subcone in $\mathcal{K}$.
Let $\bm{A},\bm{B}\in\mathcal{K}$.
Note that all the elements of $\bm{A}$ and $\bm{B}$ are nonnegative because $\mathcal{K} \subseteq \mathcal{N}^n$.
Now, we assume that $\bm{A} + \bm{B} \in \mathcal{K}[\mathcal{I}_{ij}]$.
Then for each $(k,l)\in \mathcal{I}_{ij}$, we have $A_{kl} + B_{kl} = 0$.
By the nonnegativity of $\bm{A}$ and $\bm{B}$, $A_{kl}$ and $B_{kl}$ must be zero.
Therefore, we obtain $\bm{A},\bm{B}\in\mathcal{K}[\mathcal{I}_{ij}]$.
\end{proof}

\begin{proposition}\label{prop:longest_chain_CP}
For any closed convex cone $\mathcal{K} \subseteq \calS^n$ with $\DD_+^n \subseteq \mathcal{K} \subseteq \mathcal{N}^n$, the following inclusions hold:
\begin{multline}
\underbrace{\mathcal{K}[\mathcal{I}_{nn}]}_{\text{$1$ face}} \subsetneq \underbrace{\mathcal{K}[\mathcal{I}_{n-1,n-1}] \subsetneq \mathcal{K}[\mathcal{I}_{n-1,n}]}_{\text{$2$ faces}} \subsetneq \mathcal{K}[\mathcal{I}_{n-2,n-2}] \subsetneq \cdots\\
\subsetneq\mathcal{K}[\mathcal{I}_{2n}] \subsetneq \underbrace{\mathcal{K}[\mathcal{I}_{11}] \subsetneq \cdots \subsetneq \mathcal{K}[\mathcal{I}_{1,n-1}] \subsetneq \mathcal{K}[\mathcal{I}_{1n}]}_{\text{$n$ faces}} \subsetneq \mathcal{K}[\mathcal{I}_{00}] = \mathcal{K}. \label{eq:chain_face_CP}
\end{multline}
In particular, we have $\ell_{\mathcal{K}} = T_n + 1$.
\end{proposition}

\begin{figure}[t]
\begin{center}
\includegraphics{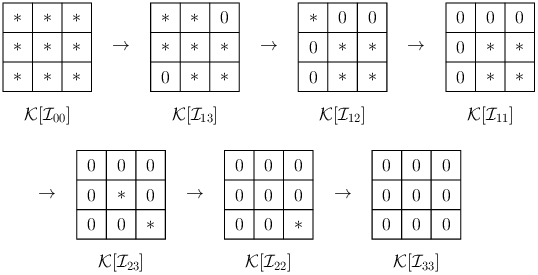}
\end{center}
\caption{The chain \eqref{eq:chain_face_CP} of faces of $\mathcal{K}$.
For each matrix belonging to each face $\mathcal{K}[\mathcal{I}_{ij}]$, the elements corresponding to the symbol $0$ must be zero, and those corresponding to the symbol $*$ can be nonzero.
}\label{fig:CP_face}
\end{figure}

Fig.~\ref{fig:CP_face} illustrates the chain~\eqref{eq:chain_face_CP} with $n = 3$.

\begin{figure}[t]
\begin{center}
\includegraphics{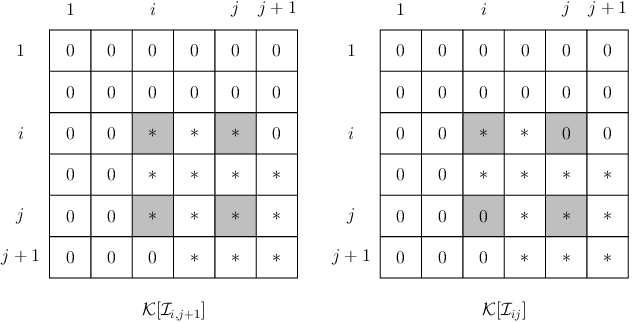}
\end{center}
\caption{Illustration of $\bm{E}_{ij} \in \mathcal{K}[\mathcal{I}_{i,j+1}] \setminus \mathcal{K}[\mathcal{I}_{ij}]$ in the proof of Proposition~\ref{prop:longest_chain_CP}.
For each matrix belonging to a face $\mathcal{K}[\mathcal{I}_{i,j+1}]$ or $\mathcal{K}[\mathcal{I}_{ij}]$, the elements corresponding to the symbol $0$ must be zero, and those corresponding to the symbol $*$ can be nonzero.
In addition, the grids where nonzero elements of the matrix $\bm{E}_{ij}$ exist are presented in gray.
On the one hand, because all the ``$0$'' grids in $\mathcal{K}[\mathcal{I}_{i,j+1}]$ are not gray, we have $\bm{E}_{ij} \in \mathcal{K}[\mathcal{I}_{i,j+1}]$.
On the other hand, as some of the ``$0$'' grids in $\mathcal{K}[\mathcal{I}_{ij}]$ are gray, we have $\bm{E}_{ij} \not\in \mathcal{K}[\mathcal{I}_{ij}]$.
}\label{fig:CP_proof_chain}
\end{figure}

\begin{proof}
For each $i = 1,\dots,n$ and $j = i,\dots,n$, \eqref{eq:I} implies that $\mathcal{K}[\mathcal{I}_{ij}] \subseteq \mathcal{K}[\mathcal{I}_{i,j+1}]$.
In what follows, we prove $\bm{E}_{ij} \in \mathcal{K}[\mathcal{I}_{i,j+1}] \setminus \mathcal{K}[\mathcal{I}_{ij}]$ to show the strictness of this inclusion.
Refer to Fig.~\ref{fig:CP_proof_chain} for a better understanding of the following proof.
We have $\bm{E}_{ij} \in \DD_+^n \subseteq \mathcal{K}$ and only the $(i,i)$th, $(j,j)$th, $(i,j)$th, and $(j,i)$th elements of $\bm{E}_{ij}$ are nonzero.
Therefore, for $\mathcal{I} \in \{\mathcal{I}_{i,j+1},\mathcal{I}_{ij}\}$, the matrix $\bm{E}_{ij}$ belongs to $\mathcal{K}[\mathcal{I}]$ if and only if all of the tuples $(i,i)$, $(j,j)$, and $(i,j)$ do not belong to $\mathcal{I}$.
On the one hand, by definition, $\bm{E}_{ij} \in \mathcal{K}[\mathcal{I}_{i,j+1}]$.
On the other hand, as $(i,j)\in \mathcal{I}_{ij}$, we have $\bm{E}_{ij} \not\in \mathcal{K}[\mathcal{I}_{ij}]$.

By $\mathcal{K} \subseteq \mathcal{N}^n$ and Lemma~\ref{lem:face_CP}, each set in the sequence~\eqref{eq:chain_face_CP} is a face of $\mathcal{K}$.
Because this chain comprises $(T_n + 1)$ faces of $\mathcal{K}$, we have $\ell_{\mathcal{K}} \ge T_n + 1$.
Combining this with Lemma~\ref{lem:length_upper_bound} yields $\ell_{\mathcal{K}} = T_n + 1$.
\end{proof}

\begin{corollary}\label{cor:dist_poly_CP}
For any closed convex cone $\mathcal{K} \subseteq \calS^n$ with $\CP^n \subseteq \mathcal{K} \subseteq \DNN^n$, we have $\ell_{\mathcal{K}} = T_n + 1$ and
\begin{equation}
\ell_{\rm poly}(\mathcal{K}) = \begin{cases}
0 & (n = 1),\\
T_n -2 & (n \ge 2).
\end{cases}\label{eq:dist_poly_CP}
\end{equation}
\end{corollary}

\begin{proof}
It follows from \eqref{eq:inclusion_CP} and Proposition~\ref{prop:longest_chain_CP} that $\ell_{\mathcal{K}} = T_n + 1$ with the longest chain~\eqref{eq:chain_face_CP}.
In what follows, we prove \eqref{eq:dist_poly_CP}.
Because the case of $n = 1$ follows from \eqref{enum:n=1} of Lemma~\ref{lem:dist_poly_upper_bound}, we only consider the case where $n \ge 2$.
The inclusion $\CP^n \subseteq \mathcal{K} \subseteq \DNN^n$ implies that
\begin{equation}
\CP^n[\mathcal{I}_{n-2,n-2}] \subseteq \mathcal{K}[\mathcal{I}_{n-2,n-2}] \subseteq \DNN^n[\mathcal{I}_{n-2,n-2}]. \label{eq:CP_K_DNN_face}
\end{equation}
From Lemma~\ref{lem:equation}, both of the first and third sets of \eqref{eq:CP_K_DNN_face} are equal to
\begin{equation}
\left\{\begin{pmatrix}
\bm{O} & \bm{O}\\
\bm{O} & \bm{A}
\end{pmatrix} \in\calS^n \relmiddle| \bm{A}\in\CP^2 \right\}. \label{eq:K_n-2_n-2}
\end{equation}
Therefore, the face $\mathcal{K}[\mathcal{I}_{n-2,n-2}]$ agrees with \eqref{eq:K_n-2_n-2}, which is not polyhedral since $\CP^2$ has infinitely many extreme rays~\cite[Remark~3.26]{SB2021}.
Given the fact that each face of a polyhedral cone is also polyhedral~\cite[page~172]{Rockafellar1970}, all the $(T_n - 2)$ faces $\mathcal{K}[\mathcal{I}_{n-2,n-2}] \subsetneq \cdots \subsetneq \mathcal{K}$ in the chain \eqref{eq:chain_face_CP} are not polyhedral.
Thus, we have $\ell_{\rm poly}(\mathcal{K}) \ge T_n -2$.
Combining this with \eqref{enum:n_ge_2} of Lemma~\ref{lem:dist_poly_upper_bound} yields the desired result.
\end{proof}

\begin{remark}\label{rem:difference}
Louren\c{c}o, Muramatsu, and Tsuchiya~\cite{LMT2018} have provided a chain of faces of $\DNN^n$ with length $T_n + 1$.
The chain is constructed by restricting first the $\frac{n(n-1)}{2}$ nondiagonal elements and then the $n$ diagonal elements to zero.
However, because of the existence of a polyhedral face of dimension $n$, the chain does not lead to $\ell_{\rm poly}(\DNN^n) = T_n -2$ when $n \ge 3$.
\end{remark}

\section{The copositive side}\label{sec:COP}
For $i = 1,\dots,n$ and $j = i,\dots,n$, let
\begin{equation*}
\mathcal{J}_{i,j} \coloneqq \bigcup_{k=1}^{i-1}\bigcup_{l=k}^n\{(k,l)\} \cup \bigcup_{l=i}^j\{(i,l)\}.
\end{equation*}
Here, $\mathcal{J}_{ij}$ and $\mathcal{J}_{i,j}$ are used interchangeably.
For convenience, we let $\mathcal{J}_{0n} \coloneqq \emptyset$ and $\mathcal{J}_{i,i-1} \coloneqq \mathcal{J}_{i-1,n}$ for $i = 1,\dots,n$.
Under this notation, for $i = 1,\dots,n$ and $j = i,\dots,n$, we have
\begin{equation}
\mathcal{J}_{ij} = \mathcal{J}_{i,j-1} \cup \{(i,j)\}. \label{eq:J}
\end{equation}

\begin{lemma}\label{lem:face_COP}
Let $\mathcal{K} \subseteq \calS^n$ be a closed convex cone with $\mathcal{K} \subseteq (\SDD_+^n)^*$.
For $i = 1,\dots,n$ and $j = i,\dots,n$, the set $\mathcal{K}[\mathcal{J}_{ij}]$ is a face of $\mathcal{K}$.
\end{lemma}
\begin{proof}
It can be observed that $\mathcal{K}[\mathcal{J}_{ij}]$ is a convex subcone in $\mathcal{K}$.
Let $\bm{A},\bm{B}\in \mathcal{K}$.
Note that $\bm{A}$ and $\bm{B}$ also belong to $(\SDD_+^n)^*$.
Now, we assume that $\bm{A} + \bm{B} \in \mathcal{K}[\mathcal{J}_{ij}]$.
Then for each $k = 1,\dots,i$, as $(k,k)\in\mathcal{J}_{ij}$, we observe that $A_{kk} + B_{kk} = 0$.
Because $A_{kk}$ and $B_{kk}$ are nonnegative (see \eqref{enum:dual_SDD+_diag} of Lemma~\ref{lem:dual_SDD+}), they must be zero.
This implies that for each $(k,l)\in \mathcal{J}_{ij}$, $A_{kl}$ and $B_{kl}$ are nonnegative (see \eqref{enum:dual_SDD+_nondiag} of Lemma~\ref{lem:dual_SDD+}).
Combining this with $A_{kl} + B_{kl} = 0$, we observe that $A_{kl}$ and $B_{kl}$ are also zero.
Thus, we obtain $\bm{A},\bm{B}\in\mathcal{K}[\mathcal{J}_{ij}]$.
\end{proof}

\begin{remark}
Lemma~\ref{lem:face_COP} does not hold for $\mathcal{K} = (\DD_+^n)^*$.
For example, let
\begin{equation*}
\bm{A} \coloneqq \begin{pmatrix}
0 & 1\\
1 & 2
\end{pmatrix}
,\ \bm{B} \coloneqq \begin{pmatrix}
0 & -1\\
-1 & 2
\end{pmatrix}.
\end{equation*}
Then it can be observed from \eqref{eq:dual_DD+} that $\bm{A},\bm{B} \in (\DD_+^2)^*$.
On the one hand, as
\begin{equation*}
\bm{A} + \bm{B} = \begin{pmatrix}
0 & 0\\
0 & 4
\end{pmatrix} \in (\DD_+^2)^*
\end{equation*}
and $A_{11} + B_{11} = A_{12} + B_{12} = 0$, it follows that $\bm{A} + \bm{B} \in (\DD_+^2)^*[\mathcal{J}_{12}]$.
On the other hand, $\bm{A},\bm{B} \not\in (\DD_+^2)^*[\mathcal{J}_{12}]$ since $A_{12},B_{12} \neq 0$.
Thus, $(\DD_+^2)^*[\mathcal{J}_{12}]$ is not a face of $(\DD_+^2)^*$.
\end{remark}

\begin{proposition}\label{prop:longest_chain_COP}
For any closed convex cone $\mathcal{K} \subseteq \calS^n$ with $\mathcal{N}^n \subseteq \mathcal{K} \subseteq (\SDD_+^n)^*$, the following inclusions hold:
\begin{multline}
\underbrace{\mathcal{K}[\mathcal{J}_{nn}]}_{\text{$1$ face}} \subsetneq \underbrace{\mathcal{K}[\mathcal{J}_{n-1,n}] \subsetneq \mathcal{K}[\mathcal{J}_{n-1,n-1}]}_{\text{$2$ faces}} \subsetneq \mathcal{K}[\mathcal{J}_{n-2,n}] \subsetneq \cdots\\
\subsetneq \mathcal{K}[\mathcal{J}_{22}] \subsetneq \underbrace{\mathcal{K}[\mathcal{J}_{1n}] \subsetneq \cdots \subsetneq \mathcal{K}[\mathcal{J}_{12}] \subsetneq \mathcal{K}[\mathcal{J}_{11}]}_{\text{$n$ faces}} \subsetneq \mathcal{K}[\mathcal{J}_{0n}] = \mathcal{K}. \label{eq:chain_face_COP}
\end{multline}
In particular, we have $\ell_{\mathcal{K}} = T_n + 1$.
\end{proposition}

\begin{figure}[t]
\begin{center}
\includegraphics{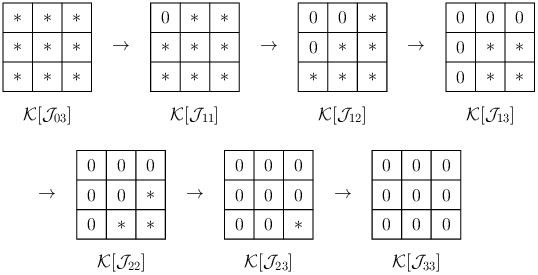}
\end{center}
\caption{The chain \eqref{eq:chain_face_COP} of faces of $\mathcal{K}$.
For each matrix belonging to each face $\mathcal{K}[\mathcal{J}_{ij}]$, the elements corresponding to the symbol $0$ must be zero, and those corresponding to the symbol $*$ can be nonzero.
}\label{fig:COP_face}
\end{figure}

Fig.~\ref{fig:COP_face} illustrates the chain~\eqref{eq:chain_face_COP} with $n = 3$.

\begin{figure}[t]
\begin{center}
\includegraphics{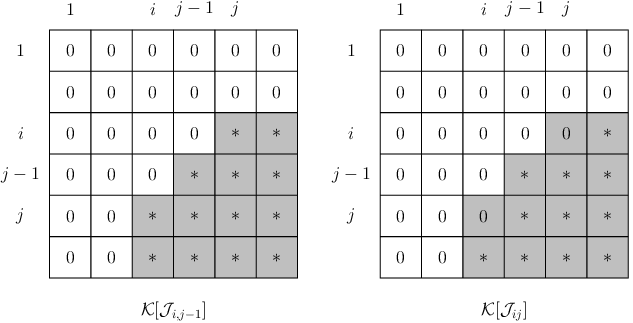}
\end{center}
\caption{Illustration of $\bm{E}[\mathcal{J}_{i,j-1}] \in \mathcal{K}[\mathcal{J}_{i,j-1}] \setminus \mathcal{K}[\mathcal{J}_{ij}]$ in the proof of Proposition~\ref{prop:longest_chain_COP}.
For each matrix belonging to a face $\mathcal{K}[\mathcal{J}_{i,j-1}]$ or $\mathcal{K}[\mathcal{J}_{ij}]$, the elements corresponding to the symbol $0$ must be zero, and those corresponding to the symbol $*$ can be nonzero.
In addition, the grids where nonzero elements of the matrix $\bm{E}[\mathcal{J}_{i,j-1}]$ exist are presented in gray.
On the one hand, as all the ``$0$'' grids in $\mathcal{K}[\mathcal{J}_{i,j-1}]$ are not in gray, we have $\bm{E}[\mathcal{J}_{i,j-1}]\in \mathcal{K}[\mathcal{J}_{i,j-1}]$.
On the other hand, as some of the ``$0$'' grids in $\mathcal{K}[\mathcal{J}_{ij}]$ are gray, we have $\bm{E}[\mathcal{J}_{i,j-1}] \not\in \mathcal{K}[\mathcal{J}_{ij}]$.
}\label{fig:COP_proof_chain}

\end{figure}

\begin{proof}
For each $i = 1,\dots,n$ and $j = i,\dots,n$, \eqref{eq:J} implies that $\mathcal{K}[\mathcal{J}_{ij}] \subseteq \mathcal{K}[\mathcal{J}_{i,j-1}]$.
In what follows, we prove $\bm{E}[\mathcal{J}_{i,j-1}] \in \mathcal{K}[\mathcal{J}_{i,j-1}] \setminus \mathcal{K}[\mathcal{J}_{ij}]$ to show the strictness of this inclusion.
Refer to Fig.~\ref{fig:COP_proof_chain} for a better understanding of the following proof.
Because $\bm{E}[\mathcal{J}_{i,j-1}] \in \mathcal{N}^n \subseteq \mathcal{K}$, it suffices to check whether or not the $(k,l)$th element of the matrix $\bm{E}[\mathcal{J}_{i,j-1}]$ is zero for each $(k,l)$ belonging to $\mathcal{J} \in \{\mathcal{J}_{i,j-1},\mathcal{J}_{ij}\}$.
On the one hand, as the $(k,l)$th element of $\bm{E}[\mathcal{J}_{i,j-1}]$ is zero for all $(k,l)\in \mathcal{J}_{i,j-1}$, we have $\bm{E}[\mathcal{J}_{i,j-1}] \in \mathcal{K}[\mathcal{J}_{i,j-1}]$.
On the other hand, as $(i,j)\in \mathcal{J}_{ij}$ and the $(i,j)$th element of $\bm{E}[\mathcal{J}_{i,j-1}]$ is one, we have $\bm{E}[\mathcal{J}_{i,j-1}] \not\in \mathcal{K}[\mathcal{J}_{ij}]$.

By $\mathcal{K} \subseteq (\SDD_+^n)^*$ and Lemma~\ref{lem:face_COP}, each set in the sequence~\eqref{eq:chain_face_COP} is a face of $\mathcal{K}$.
Because this chain is composed of $(T_n + 1)$ faces of $\mathcal{K}$, we have $\ell_{\mathcal{K}} = T_n + 1$.
\end{proof}

\begin{corollary}\label{cor:dist_poly_COP}
For any closed convex cone $\mathcal{K} \subseteq \calS^n$ with $\SPN^n \subseteq \mathcal{K} \subseteq \COP^n$, we have $\ell_{\mathcal{K}} = T_n + 1$ and
\begin{equation}
\ell_{\rm poly}(\mathcal{K}) = \begin{cases}
0 & (n = 1),\\
T_n -2 & (n \ge 2).
\end{cases} \label{eq:dist_poly_COP}
\end{equation}
\end{corollary}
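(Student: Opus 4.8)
The plan is to transplant the proof of Corollary~\ref{cor:dist_poly_CP} to the copositive side, using the chain~\eqref{eq:chain_face_COP} from Proposition~\ref{prop:longest_chain_COP} in place of~\eqref{eq:chain_face_CP}. The inclusions~\eqref{eq:inclusion_COP} give $\mathcal{N}^n \subseteq \SPN^n \subseteq \mathcal{K} \subseteq \COP^n \subseteq (\SDD_+^n)^*$, so Proposition~\ref{prop:longest_chain_COP} applies directly and yields $\ell_{\mathcal{K}} = T_n+1$ with~\eqref{eq:chain_face_COP} as an explicit longest chain. For the distance to polyhedrality, the case $n=1$ is \eqref{enum:n=1} of Lemma~\ref{lem:dist_poly_upper_bound}, so I would assume $n\ge 2$ and exhibit a long run of non-polyhedral faces inside~\eqref{eq:chain_face_COP}.

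The crucial step is to pin down the face $\mathcal{K}[\mathcal{J}_{n-2,n}]$, obtained from $\mathcal{K}$ by forcing the first $n-2$ rows (equivalently, columns) to vanish. Since $\mathcal{J}_{n-2,n}$ consists of all index pairs meeting one of those rows, every matrix in $\COP^n[\mathcal{J}_{n-2,n}]$ or in $\SPN^n[\mathcal{J}_{n-2,n}]$ is block diagonal, with an $(n-2)\times(n-2)$ zero block and a $2\times 2$ block $\bm{B}$; on the $\SPN$ side one invokes the standard fact that a zero diagonal entry of a positive semidefinite matrix forces the whole corresponding row to vanish, so that in a decomposition $\bm{A}=\bm{P}+\bm{N}$ with $\bm{P}$ positive semidefinite and $\bm{N}$ nonnegative both summands are block diagonal and $\bm{B}\in\SPN^2$. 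Conversely, for such block matrices copositivity of $\bm{A}$ is equivalent to $\bm{B}\in\COP^2$, and membership in $\SPN^n$ to $\bm{B}\in\SPN^2$. Because $\SPN^2=\COP^2$ by Lemma~\ref{lem:equation}, the sandwich $\SPN^n[\mathcal{J}_{n-2,n}]\subseteq\mathcal{K}[\mathcal{J}_{n-2,n}]\subseteq\COP^n[\mathcal{J}_{n-2,n}]$ collapses, and
\[
\mathcal{K}[\mathcal{J}_{n-2,n}] = \left\{\begin{pmatrix}
\bm{O} & \bm{O}\\
\bm{O} & \bm{B}
\end{pmatrix} \in\mathbb{S}^n \relmiddle| \bm{B}\in\COP^2 \right\},
\]
which is linearly isomorphic to $\COP^2$ and hence not polyhedral: $\COP^2=(\CP^2)^*$, and as $\CP^2$ has infinitely many extreme rays it is not polyhedral, so neither is its dual.

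To finish, I would observe that every face strictly containing $\mathcal{K}[\mathcal{J}_{n-2,n}]$ in~\eqref{eq:chain_face_COP} is also non-polyhedral, because a face of a polyhedral cone is polyhedral; this yields the $T_n-2$ non-polyhedral faces $\mathcal{K}[\mathcal{J}_{n-2,n}]\subsetneq\cdots\subsetneq\mathcal{K}$. Prepending $\mathcal{K}[\mathcal{J}_{n-1,n-1}]$, which lies immediately below $\mathcal{K}[\mathcal{J}_{n-2,n}]$ in~\eqref{eq:chain_face_COP} and has dimension at most two (its only free entries are $(n-1,n)$ and $(n,n)$), hence is polyhedral, produces a chain of length $T_n-1$ of the kind required in the definition of $\ell_{\rm poly}$. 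Thus $\ell_{\rm poly}(\mathcal{K})\ge T_n-2$, and \eqref{enum:n_ge_2} of Lemma~\ref{lem:dist_poly_upper_bound} gives the matching upper bound.

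I expect the main obstacle to be the identification of $\mathcal{K}[\mathcal{J}_{n-2,n}]$ with a copy of $\COP^2$: one has to verify carefully that the zero pattern $\mathcal{J}_{n-2,n}$ imposed on an element of $\SPN^n$ forces any positive-semidefinite-plus-nonnegative decomposition to be block diagonal, so that the trailing block genuinely lies in $\SPN^2$ and not merely in $\COP^2$; only then does $\SPN^2=\COP^2$ allow one to squeeze $\mathcal{K}[\mathcal{J}_{n-2,n}]$ between two copies of $\COP^2$. The remaining work---the index-set bookkeeping with the $\mathcal{J}_{ij}$ and the dimension count fixing the number of non-polyhedral faces at exactly $T_n-2$---is routine and runs parallel to the completely positive case of Corollary~\ref{cor:dist_poly_CP}.
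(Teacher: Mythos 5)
Your proposal is correct and follows essentially the same route as the paper's proof: the chain \eqref{eq:chain_face_COP}, the identification of $\mathcal{K}[\mathcal{J}_{n-2,n}]$ with a copy of $\COP^2$ via the sandwich \eqref{eq:SPN_K_COP_face} and Lemma~\ref{lem:equation}, non-polyhedrality of that face and of all faces above it, and the upper bound from Lemma~\ref{lem:dist_poly_upper_bound}. The only differences are that you spell out details the paper leaves implicit (the block-diagonal structure of $\SPN^n[\mathcal{J}_{n-2,n}]$, deducing non-polyhedrality of $\COP^2$ by duality from $\CP^2$ instead of citing its extreme rays directly, and explicitly appending the polyhedral face $\mathcal{K}[\mathcal{J}_{n-1,n-1}]$), all of which are sound.
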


\begin{proof}
It follows from \eqref{eq:inclusion_COP} and Proposition~\ref{prop:longest_chain_COP} that $\ell_{\mathcal{K}} = T_n + 1$ with the longest chain~\eqref{eq:chain_face_COP}.
In what follows, we prove \eqref{eq:dist_poly_COP}.
Since the case $n = 1$ follows from \eqref{enum:n=1} of Lemma~\ref{lem:dist_poly_upper_bound}, we only consider the case of $n \ge 2$.
The inclusion $\SPN^n \subseteq \mathcal{K} \subseteq \COP^n$ implies that
\begin{equation}
\SPN^n[\mathcal{J}_{n-2,n}] \subseteq \mathcal{K}[\mathcal{J}_{n-2,n}] \subseteq \COP^n[\mathcal{J}_{n-2,n}]. \label{eq:SPN_K_COP_face}
\end{equation}
From Lemma~\ref{lem:equation}, both the first and third sets of \eqref{eq:SPN_K_COP_face} are equal to
\begin{equation}
\left\{\begin{pmatrix}
\bm{O} & \bm{O}\\
\bm{O} & \bm{A}
\end{pmatrix} \in\calS^n \relmiddle| \bm{A}\in\COP^2 \right\}. \label{eq:K_n-2_n}
\end{equation}
Therefore, the face $\mathcal{K}[\mathcal{J}_{n-2,n}]$ agrees with \eqref{eq:K_n-2_n},
which is not polyhedral as $\COP^2$ has infinitely many extreme rays~\cite[Theorem~2.29]{SB2021}.
Therefore, we obtain $\ell_{\rm poly}(\mathcal{K}) = T_n -2$.
\end{proof}

\begin{remark}
The order in which the elements of a matrix are restricted to zero is essential for obtaining the longest chain of faces.
Let us consider what would happen if we used $\mathcal{I}_{ij}$, defined in Section~\ref{sec:CP}, instead of $\mathcal{J}_{ij}$ in the discussion of this section.
Then for a closed convex cone $\mathcal{K}$ with $\SPN^n \subseteq \mathcal{K} \subseteq \COP^n$, the set $\mathcal{K}[\mathcal{I}_{ij}]$ is not necessarily a face of $\mathcal{K}$.
For example, consider the set $\mathcal{K}[\mathcal{I}_{1n}]$.
Let
\begin{equation*}
\bm{A} \coloneqq \begin{pmatrix}
1 &  &  -1\\
&  \bm{O} &  \\
-1 &  &  1
\end{pmatrix},\ \bm{B} \coloneqq \begin{pmatrix}
1 &  &  1\\
&  \bm{O} &  \\
1 &  &  1
\end{pmatrix} \in \calS_+^n \subseteq \mathcal{K}.
\end{equation*}
On the one hand, since
\begin{equation*}
\bm{A} + \bm{B} = \begin{pmatrix}
2 &  &  0\\
&  \bm{O} &  \\
0 &  &  2
\end{pmatrix} \in \mathcal{K}
\end{equation*}
and $A_{1n} + B_{1n} = 0$, it follows that $\bm{A} + \bm{B} \in \mathcal{K}[\mathcal{I}_{1n}]$.
On the other hand, $\bm{A},\bm{B}\not\in \mathcal{K}[\mathcal{I}_{1n}]$ since $A_{1n},B_{1n} \neq 0$.
Thus, $\mathcal{K}[\mathcal{I}_{1n}]$ is not a face of $\mathcal{K}$.

Conversely, for a closed convex cone $\mathcal{K}$ with $\CP^n \subseteq \mathcal{K} \subseteq \DNN^n$ and the set $\mathcal{J}_{ij}$, $\mathcal{K}[\mathcal{J}_{ij}]$ is indeed a face of $\mathcal{K}$.
However, for each $i = 1,\dots,n$, all the inclusions in $\mathcal{K}[\mathcal{J}_{in}] \subseteq  \cdots \subseteq \mathcal{K}[\mathcal{J}_{ii}]$ hold with equality.
This is because for any $\bm{A} \in \DNN^n$, $A_{ii} = 0$ implies that $A_{ij} = 0$ for all $j = 1,\dots,n$.
Therefore, if we use $\mathcal{J}_{ij}$ instead of $\mathcal{I}_{ij}$ in the discussion of Section~\ref{sec:CP}, we cannot obtain a chain of faces of $\mathcal{K}$ with length $T_n + 1$.
\end{remark}

\section{Conclusion}\label{sec:conclusion}
In this paper, we demonstrated the construction of a chain of faces of length $\ell_{\mathcal{K}} = T_n + 1$ for any closed convex cone $\mathcal{K}$ satisfying either $\DD_+^n \subseteq \mathcal{K} \subseteq \mathcal{N}^n$ or $\mathcal{N}^n \subseteq \mathcal{K} \subseteq (\SDD_+^n)^*$, where $T_n$ is defined as $\frac{n(n+1)}{2}$.
Furthermore, for any closed convex cone $\mathcal{K}$ satisfying $\CP^n \subseteq \mathcal{K} \subseteq \DNN^n$ or $\SPN^n \subseteq \mathcal{K} \subseteq \COP^n$ with $n\ge 2$, we established that $\ell_{\rm poly}(\mathcal{K}) = T_n - 2$ as well as $\ell_{\mathcal{K}} = T_n + 1$, which are the maximum possible for any closed convex cone in $\calS^n$.
Notably, such cones $\mathcal{K}$ include not only the completely positive, doubly nonnegative, SPN, and copositive cones but also the closure of the completely positive semidefinite cone and some approximation hierarchies.

As mentioned in Section~\ref{sec:intro}, the value $\ell_{\rm poly}(\mathcal{K}) + 1$ is an upper bound for the singularity degree of a linear conic feasibility problem over $\mathcal{K}$.
However, this bound is not tight in general.
For example, the singularity degree of a linear conic feasibility problem over $\DNN^n$ is bounded by $n$~\cite[Corollary~20]{LMT2018}, which is smaller than $\ell_{\rm poly}(\DNN^n) + 1 = T_n - 1$ when $n \ge 3$.
It is an open problem to determine whether the upper bound for the singularity degree is tight or not for other cones.

\vspace{0.5cm}
\noindent
{\bf Acknowledgments}
The author would like to thank Bruno F. Louren\c{c}o for useful discussions.
This work was supported by Japan Society for the Promotion of Science Grants-in-Aid for Scientific Research (Grant Number JP22KJ1327).

\section*{Declarations}
{\bf Conflict of interest}
The author declares that there are no competing interests.

\end{document}